\newtheorem{thm}{Theorem}
\newtheorem{lem}{Lemma}[section]
\newtheorem{rem}{Remark}[section]
\numberwithin{equation}{section}
\theoremstyle{definition}
\newcommand{\C}{{\mathbb C}}
\newcommand{\R}{{\mathbb R}}
\def\11{{\rm 1~\hspace{-1.4ex}l} }
\begin{document}

\title[Ground states for $p-$Choquard model]{Existence and uniqueness of ground states  for $p$ - Choquard model in 3D}

\author{Vladimir Georgiev}
\address[V.~Georgiev]{Department of Mathematics \\ University of Pisa \\ Largo Bruno Pontecorvo 5 \\ 56127 Pisa (Italy)  \\ and
\\ Faculty of Science and Engineering \\ Waseda University \\
 3-4-1, Okubo, Shinjuku-ku, Tokyo 169-8555 \\
Japan \\ and IMI--BAS, Acad.
Georgi Bonchev Str., Block 8, 1113 Sofia, Bulgaria}
\email{georgiev@dm.unipi.it}
\author{Mirko Tarulli}
\address[M.~Tarulli]{ Faculty of Applied Mathematics and Informatics, Technical University of Sofia, Kliment Ohridski Blvd. 8, 1000 Sofia, and IMI BAS, Acad. Georgi Bonchev Str., Block 8, 1113 Sofia, Bulgaria}
\email{mta@tu-sofia.bg}%
\author{George Venkov}
\address[G.~Venkov]{ Faculty of Applied Mathematics and Informatics, Technical University of Sofia, Kliment Ohridski Blvd. 8, 1000 Sofia, Bulgaria}
\email{gvenkov@tu-sofia.bg}%

\thanks{ The first author was supported in part by  INDAM, GNAMPA - Gruppo Nazionale per l'Analisi Matematica, la Probabilita e le loro Applicazioni, by Institute of Mathematics and Informatics, Bulgarian Academy of Sciences and by Top Global University Project, Waseda University.}

\begin{abstract}
We study the $p$-Choquard equation in 3-dimensional case and establish existence and uniqueness of ground states for the corresponding Weinstein functional. For proving the uniqueness of ground states, we use the radial symmetry
to transform the equation into an ordinary differential system, and applying the Pohozaev identities and Gronwall lemma we show that any two Weinstein minimizers coincide.
\end{abstract}

\subjclass[2010]{35Q51, 35Q40, 35Q55,  49S05}

\keywords{ p-Choquard equation, nonlocal nonlinearity, ground state}

\maketitle

\section{Introduction}

Ground states for the classical  Hartree-Choquard equation are minimizers of the Hamiltonian
\begin{equation}\label{eq_1}
    H(\psi)  = \frac{1}{2} \|\nabla \psi \|_{L^2(\R^3)}^2 - \frac{1}{4} D(|\psi|^2,|\psi|^2),
\end{equation}
where
$D(f,g)$ is the quadratic form associated with Coulomb  energy functional, i.e.
\begin{equation} \label{eq.Dsfg}
D(f,g) =  \int_{\R^3} I(f)(x) \overline{g(x)} dx
\end{equation}
and
\begin{equation} \label{eq.Dsfg2}
I(f)(x) = \frac{1}{4\pi}\int_{\R^3} f(y) \frac{dy}{|x-y|}
\end{equation}
 is the classical Riesz potential.
For any $p \geq 2$  one can define  a  modified $p$-Hamiltonian as follows
 \begin{equation}\label{HP_2}
    H_p(\psi)  = \frac{1}{2} \|\nabla ( \psi|\psi|^{(2-p)/p}) \|_{L^2(\R^3)}^2 - \frac{1}{2p} D(|\psi|^2,|\psi|^2).
\end{equation}

The ground states are
solutions to the constraint  minimization problem
\begin{equation}\label{eq.I1}
      \inf_{\{\psi \in H^1(\R^3);  \|\psi\|_{L^2(\R^3)}^2 = \lambda \}}H_{p}(\psi).
\end{equation}
A simple substitution
$$ \psi|\psi|^{(2-p)/p} = u $$  enables us to transform \eqref{eq.I1} into the  problem to find minimizer of
\begin{equation}\label{eq.I1a}
      \inf_{\{u \in \dot{H}^1(\R^3) \cap L^p(\R^3);  \|u\|_{L^p}^p = \lambda \}} \mathcal{H}_{p}(u),
\end{equation}
where $\dot{H}^1(\R^3)$ is the classical homogeneous Sobolev space and
 \begin{equation}\label{HP_3}
    \mathcal{H}_p(u)  = \frac{1}{2} \|\nabla u \|_{L^2(\R^3)}^2 - \frac{1}{2p} D(|u|^p,|u|^p).
\end{equation}
Standard symmetrization argument (we mean Schwartz symmetrization \cite{Lieb1}) and the Gagliardo - Nirenberg inequality
\begin{equation}\label{eq.I7}
   D(|u|^p,|u|^p) \leq C_{GN} \| \nabla u \|_{L^2(\R^3)}^{2p/(6-p)} \|u\|_{L^p(\R^3)}^{2p(5-p)/(6-p)}, \ \ \forall p \in [1,5],
\end{equation}
imply the existence of positive radial decreasing minimizers of \eqref{eq.I1a}, but only for the range $2 \leq p < 3.$

 In this work we plan to study existence and uniqueness of ground states for larger interval $2 \leq  p  < 5$ and for this reason we can define the Weinstein functional
  (see \cite{W85})
\begin{equation}
\label{WF1}
W_p(u) = \frac{\|\nabla u(x)\|_{L^2(\R^3)}^{2p/(6-p)}\|u(x)\|_{L^p(\R^3)}^{2p(5-p)/(6-p)} }{D(|u|^p,|u|^p)}
\end{equation}
and consider the associated minimization problem
\begin{equation}\label{eq.Wmin}
     W^{min}_p= \inf_{\{u \in \dot{H}^1(\R^3) \cap L^p(\R^3); u \neq 0 \}} W_{p}(u).
\end{equation}

One can easily verify the relation $ W^{min}_p = C_{GN}^{-1}$, where $C_{GN}$ is the best constant in the Gagliardo -
Nirenberg inequality \eqref{eq.I7}. Also, we can observe that the Weinstein functional $W_p(u)$ is invariant under all the symmetries
as homogeneity, scaling, translation, phase rotation and conjugation.

We will only be interested
in minimizing solutions $u$ of the functional $W_p$ that lie in the energy class $\dot{H}^1 (\R^3)\cap L^p(\R^3)$, which directly implies
that its Coulomb energy given by $D(|u|^p,|u|^p)$ is finite, thanks to the Gagliardo -
Nirenberg inequality \eqref{eq.I7}. In particular we shall establish that $u$ has
some decay rate at infinity, which becomes exponentially decaying in the classical case $p=2.$

Our first main result is the following.

\begin{thm}
\label{Te1}

Assuming $2 \leq p<5$, there is a minimizer  $u \in \dot{H}^1(\R^3) \cap L^p(\R^3)$ of $W_p,$ such that
$u$ is solution of
\begin{equation}\label{eq.EL1}
   -\Delta u + |u|^{p-2}u = I(|u|^p)|u|^{p-2}u
\end{equation}
and satisfies the Pohozaev's normalization conditions
\begin{equation}\label{eq.poh2}
   \frac{\| u\|_{L^p}^p}{5-p} =\|\nabla u\|_{L^2}^2= \frac{D(|u|^p,|u|^p)}{6-p} = k,
\end{equation}
for some $k>0$.
In addition, there exists $x_0\in \R^3,$ $z \in \C$ with $|z|=1$  and a decreasing function $Q:\R_+\to \R_+$,  so that $u(x)= z Q(|x-x_0|)$.
\end{thm}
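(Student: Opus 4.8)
The plan is to establish the four assertions of Theorem~\ref{Te1} successively: existence of a minimizer of $W_p$, the equation \eqref{eq.EL1}, the normalization \eqref{eq.poh2}, and the radial profile. For existence one first reduces to radial data: if $u^{\ast}$ denotes the symmetric decreasing rearrangement, the Riesz rearrangement inequality gives $D(|u^{\ast}|^{p},|u^{\ast}|^{p})\ge D(|u|^{p},|u|^{p})$ while $\|\nabla u^{\ast}\|_{L^2}\le\|\nabla u\|_{L^2}$ and $\|u^{\ast}\|_{L^p}=\|u\|_{L^p}$, so $W_p(u^{\ast})\le W_p(u)$; hence a minimizing sequence for \eqref{eq.Wmin} may be taken to consist of nonnegative, radial, nonincreasing functions, and, using the homogeneity and dilation invariance of $W_p$, rescaled so that $\|\nabla u_n\|_{L^2}=\|u_n\|_{L^p}=1$, which forces $D(|u_n|^{p},|u_n|^{p})\to(W^{min}_p)^{-1}>0$. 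The sequence is bounded in $\dot{H}^1(\R^3)\cap L^p(\R^3)$; pass to a weak limit $u$. The crucial compactness ingredient is that the embedding of the radial subspace of $\dot{H}^1(\R^3)\cap L^p(\R^3)$ into $L^q(\R^3)$ is compact for every $q\in(p,6)$ --- the tails being controlled by the radial bound $|u_n(r)|\le Cr^{-1/2}\|\nabla u_n\|_{L^2}$ and the behaviour on balls by the Rellich theorem. Since $6p/5\in(p,6)$ exactly when $p<5$, we get $|u_n|^{p}\to|u|^{p}$ in $L^{6/5}(\R^3)$, hence by the Hardy--Littlewood--Sobolev inequality $D(|u_n|^{p},|u_n|^{p})\to D(|u|^{p},|u|^{p})=(W^{min}_p)^{-1}$, so $u\neq0$. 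Weak lower semicontinuity gives $\|\nabla u\|_{L^2}\le1$, $\|u\|_{L^p}\le1$, and since $2p/(6-p)>0$ and $2p(5-p)/(6-p)>0$ for $2\le p<5$,
\[
W_p(u)=\frac{\|\nabla u\|_{L^2}^{2p/(6-p)}\|u\|_{L^p}^{2p(5-p)/(6-p)}}{D(|u|^{p},|u|^{p})}\le W^{min}_p\le W_p(u);
\]
thus $u$ is a minimizer, equality forces $\|\nabla u\|_{L^2}=\|u\|_{L^p}=1$ (whence the convergence is strong in $\dot{H}^1\cap L^p$), and $u$ remains nonnegative, radial and nonincreasing.

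Next, the Euler--Lagrange equation. For $\varphi\in C^{\infty}_c(\R^3)$ the map $t\mapsto\log W_p(u+t\varphi)$ is differentiable near $t=0$ --- the Coulomb term has directional derivative $2p\int I(|u|^{p})|u|^{p-2}u\,\varphi$, bounded by Hardy--Littlewood--Sobolev and Sobolev embedding --- and has a minimum at $t=0$; equating the derivative to zero and using $\|\nabla u\|_{L^2}=\|u\|_{L^p}=1$, $D(|u|^{p},|u|^{p})=:D_0$, yields in the distributional sense
\[
-\Delta u+(5-p)\,|u|^{p-2}u=\frac{6-p}{D_0}\,I(|u|^{p})|u|^{p-2}u .
\]
Putting $w(x)=a\,u(bx)$ with $a,b>0$ determined by $(5-p)a^{2-p}b^{2}=1$ and $\frac{6-p}{D_0}a^{2-2p}b^{4}=1$ --- a positive solution exists because $p<5$, using $I(f(b\,\cdot))(x)=b^{-2}I(f)(bx)$ --- transforms this into \eqref{eq.EL1}; by the homogeneity and dilation invariance of $W_p$, $w$ is again a minimizer, again nonnegative, radial and nonincreasing, and we rename it $u$. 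Elliptic regularity (bootstrapping with Hardy--Littlewood--Sobolev) gives $u\in W^{2,s}_{\mathrm{loc}}$ for all $s<\infty$, and the strong maximum principle applied to $-\Delta u+c(x)u=0$ with $c=|u|^{p-2}(1-I(|u|^{p}))$ gives $u>0$ everywhere.

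For the normalization, test \eqref{eq.EL1} against $u$: $\|\nabla u\|_{L^2}^{2}+\|u\|_{L^p}^{p}=D(|u|^{p},|u|^{p})$. Multiply \eqref{eq.EL1} by $x\cdot\nabla u$, integrate by parts over $B_R$ and let $R\to\infty$, using the decay of $u$ established separately: this produces the Pohozaev identity $p\|\nabla u\|_{L^2}^{2}+6\|u\|_{L^p}^{p}=5\,D(|u|^{p},|u|^{p})$. Eliminating $D(|u|^{p},|u|^{p})$ between the two relations gives $(5-p)\|\nabla u\|_{L^2}^{2}=\|u\|_{L^p}^{p}$, hence $D(|u|^{p},|u|^{p})=(6-p)\|\nabla u\|_{L^2}^{2}$; with $k:=\|\nabla u\|_{L^2}^{2}>0$ this is precisely \eqref{eq.poh2}.

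Finally, the radial profile is immediate: the $u$ constructed above is $u=Q(|x|)$ with $Q\ge0$ nonincreasing (so $x_0=0$, $z=1$), $Q>0$ by the maximum principle, and $Q$ is strictly decreasing on its support because $I(|u|^{p})$ is strictly superharmonic ($-\Delta I(|u|^{p})=|u|^{p}>0$) and so cannot be constant on an annulus, whereas constancy of $Q$ there would force $I(|u|^{p})\equiv1$ there. I expect the hard part to be the compactness used in the first step --- ruling out loss of Coulomb energy along the minimizing sequence through the pointwise decay of radial decreasing functions; it is exactly this that confines the argument to $p<5$ (at $p=5$ one has $6p/5=6$ and the radial embedding into $L^{6}$ is no longer compact), and it is also the reason one works with the scale-invariant Weinstein functional \eqref{WF1} rather than minimizing $\mathcal{H}_p$ in \eqref{eq.I1a} directly, whose infimum is $-\infty$ once $p\ge3$.
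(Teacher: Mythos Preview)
Your argument is correct and follows the same overall route as the paper: symmetric decreasing rearrangement plus the Gagliardo--Nirenberg inequality to reduce to a normalized radial minimizing sequence, compactness via the radial Strauss-type decay (the paper's Lemma~\ref{l.S1} plays the role of your $r^{-1/2}$ bound combined with the $L^p$ constraint), first variation to obtain the Euler--Lagrange equation with the three coefficients, and then a two-parameter rescaling $u\mapsto a\,u(b\,\cdot)$ to reach \eqref{eq.EL1}. The one point on which you and the paper differ is the order of the last two steps: the paper rescales directly so that the Pohozaev normalization \eqref{eq.poh2} holds and observes that the Euler--Lagrange equation then collapses to \eqref{eq.EL1}; you instead rescale so that the equation becomes \eqref{eq.EL1}, and then derive \eqref{eq.poh2} as genuine identities by testing \eqref{eq.EL1} against $u$ and against $x\cdot\nabla u$. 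The two are equivalent, but your version has the advantage of making explicit that \eqref{eq.poh2} are consequences of \eqref{eq.EL1} (hence automatically shared by any two solutions), which is exactly the fact the paper needs later in the uniqueness argument. Your existence step is also more detailed than the paper's, which simply cites the standard Strauss/Lions machinery; your use of $6p/5\in(p,6)$ to pass the Coulomb term through Hardy--Littlewood--Sobolev is the clean way to isolate why $p<5$ is needed.
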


\begin{rem}
The Pohozaev normalization conditions \eqref{eq.poh2} imply the Pohozaev identities
\begin{equation}\label{pohoz0}
\|\nabla u\|_{L^2}^2+\|u\|_{L^p}^p=D(|u|^p,|u|^p),
\end{equation}
\begin{equation}\label{pohoz}
 \|\nabla u\|_{L^2}^2  =\frac{D(|u|^p,|u|^p)}{6-p} .
\end{equation}
\end{rem}
\begin{rem}\label{AB}
If $2 < p < 5,$ then the function $Q$  from  Theorem \ref{Te1} satisfies the decay estimate
\begin{equation}\label{eq.estim1}
 Q(|x|) \leq C  |x|^{-2/(p-2)}.
 \end{equation}
 Recall that $Q$ decays exponentially in the classical case $p=2.$  For the simple proof see Remark \ref{r2}.
\end{rem}

Our second result treats the  uniqueness of minimizers $Q$ of $W_p$ satisfying \eqref{eq.poh2}, i.e.
$$ Q \in \mathcal{G} = \{u \in \dot{H}^1_{rad}\cap L_{rad}^p; W^{min}_p= \inf_{\{u \in \dot{H}^1(\R^3) \cap L^p(\R^3); u \neq 0 \}} W_{p}(u)\}$$and such that \eqref{eq.poh2} is fulfilled.

\begin{thm} \label{Th2} For any  $ 2 \leq  p < 5$ and any two radial positive minimizers $Q_1, Q_2 \in \mathcal{G},$ that satisfy \eqref{eq.poh2}, we have
$Q_1 \equiv Q_2.$
\end{thm}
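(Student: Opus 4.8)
The plan is to use the radial symmetry to turn \eqref{eq.EL1} into an ODE system, to use the Pohozaev normalization \eqref{eq.poh2} to force the two minimizers to share all relevant integral invariants, and finally to close the argument with Gronwall's lemma. Since $Q_i$ is a radial positive minimizer of $W_p$, its Euler--Lagrange equation has the form $-\Delta Q_i + a_i|Q_i|^{p-2}Q_i = b_i\, I(|Q_i|^p)|Q_i|^{p-2}Q_i$ with $a_i,b_i>0$, and the normalization \eqref{eq.poh2} then forces $a_i=b_i=1$ (comparison of the Nehari and Pohozaev identities), so $Q_1$ and $Q_2$ both solve \eqref{eq.EL1}, exactly as in the proof of Theorem \ref{Te1}. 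By elliptic regularity and the strong maximum principle $Q_i\in C^\infty(\R^3)$ is positive and radially decreasing, and $\phi_i:=I(|Q_i|^p)>0$ is smooth and finite (because $Q_i\in L^p\cap\dot{H}^1(\R^3)$). Writing $r=|x|$ and $\Delta=\partial_r^2+\tfrac2r\partial_r$, the pair $(Q_i,\phi_i)$ solves
\begin{equation}\label{sysuniq}
-Q_i''-\tfrac2r Q_i'+Q_i^{p-1}=\phi_i\,Q_i^{p-1},\qquad -\phi_i''-\tfrac2r\phi_i'=Q_i^{p},\qquad r>0,
\end{equation}
with $Q_i'(0)=\phi_i'(0)=0$ and $Q_i(r),\phi_i(r)\to0$ as $r\to\infty$; conversely, a regular solution of the $\phi$-equation that vanishes at infinity must equal $I(|Q_i|^p)$, since the difference is a bounded harmonic function on $\R^3$.

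The first step is to show that the two minimizers share all invariants. By \eqref{eq.poh2} there are $k_i>0$ with $\|\nabla Q_i\|_{L^2}^2=k_i$, $\|Q_i\|_{L^p}^p=(5-p)k_i$ and $D(|Q_i|^p,|Q_i|^p)=(6-p)k_i$. Inserting these into \eqref{WF1} gives
\begin{equation*}
W_p(Q_i)=\frac{(5-p)^{2(5-p)/(6-p)}}{6-p}\;k_i^{\,4/(6-p)} .
\end{equation*}
Since $Q_1,Q_2$ are both minimizers, $W_p(Q_1)=W_p(Q_2)=W^{min}_p$, and as $4/(6-p)>0$ for $2\le p<5$ we conclude $k_1=k_2=:k$. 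Hence $\|\nabla Q_1\|_{L^2}=\|\nabla Q_2\|_{L^2}$, $\|Q_1\|_{L^p}=\|Q_2\|_{L^p}$, $D(|Q_1|^p,|Q_1|^p)=D(|Q_2|^p,|Q_2|^p)$, and, since $\phi_i$ is the Newtonian potential of $|Q_i|^p\in L^1$, the two potentials carry the same charge: $r\phi_1(r),\,r\phi_2(r)\to\frac{(5-p)k}{4\pi}$ as $r\to\infty$.

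The second and decisive step is to deduce that the Cauchy data of \eqref{sysuniq} coincide for $i=1,2$. Read as a first-order system for $(Q_i,Q_i',\phi_i,\phi_i')$, \eqref{sysuniq} has $C^1$ nonlinearities on $(0,\infty)\times(0,\infty)$ (recall $t\mapsto|t|^{p-2}t$ and $t\mapsto|t|^p$ are $C^1$ for $p\ge2$), and since $(Q_i,\phi_i)$ stays in a compact subset of $(0,\infty)^2$ on every bounded $r$-interval, the system is locally Lipschitz there; the singular term $\tfrac2r\partial_r$ near $r=0$ is absorbed by the usual Volterra reformulation of the radial equation. It therefore suffices to prove $Q_1(0)=Q_2(0)$ and $\phi_1(0)=\phi_2(0)$. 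I would obtain this from a Pohozaev-type differential identity for \eqref{sysuniq} combined with a Gronwall estimate: the Pohozaev identity rigidly ties the boundary data of a decaying positive radial solution (both at $r=0$ and, through the charge, at $r=\infty$) to the single invariant $k$, while a Gronwall argument applied to the difference of two such solutions upgrades this into the statement that $k$ alone determines $(Q(0),\phi(0))$; the equality $k_1=k_2$ from the first step then yields $Q_1(0)=Q_2(0)$ and $\phi_1(0)=\phi_2(0)$. (For $2<p<4$ an alternative is to integrate \eqref{sysuniq} from $r=+\infty$, where the leading profile is the universal one $r^{2/(p-2)}Q_i(r)\to\big(\tfrac{2(4-p)}{(p-2)^2}\big)^{1/(p-2)}$, $r\phi_i(r)\to\frac{(5-p)k}{4\pi}$, so the two solutions already have the same Cauchy data at infinity.)

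Finally, with $\big(Q_1(0),\phi_1(0)\big)=\big(Q_2(0),\phi_2(0)\big)$ and $Q_i'(0)=\phi_i'(0)=0$, put $v(r):=|Q_1-Q_2|(r)+|Q_1'-Q_2'|(r)+|\phi_1-\phi_2|(r)+|\phi_1'-\phi_2'|(r)$. Subtracting the first-order forms of \eqref{sysuniq} and using the local Lipschitz bounds above gives $v(r)\le C_R\int_0^r v(s)\,ds$ on every $[0,R]$, so Gronwall's lemma forces $v\equiv0$ and hence $Q_1\equiv Q_2$. The hard part is the second step --- showing that the Pohozaev-normalized invariant $k$ pins down the Cauchy data of \eqref{sysuniq} along the family of positive decaying radial solutions: here the coupled, nonlocal structure of the problem is essential, and the argument must be made uniform in $p\in[2,5)$, accounting both for the change in the decay regime of $Q_i$ as $p$ passes through $4$ and for the limited smoothness of $|u|^{p-2}u$ at $u=0$ when $2<p<3$.
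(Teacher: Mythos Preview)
Your proposal has a genuine gap at the point you yourself flag as ``the hard part.'' The second step --- showing that the single invariant $k$ pins down the Cauchy data $(Q(0),\phi(0))$ along the family of positive decaying radial solutions of \eqref{sysuniq} --- is not proved; you only announce that you ``would obtain this from a Pohozaev-type differential identity \dots\ combined with a Gronwall estimate.'' But that assertion is essentially equivalent to the uniqueness theorem itself: there is no obvious Pohozaev-type relation tying $Q(0)$ and $\phi(0)$ separately to $k$, and a Gronwall argument on the difference of two solutions can only propagate coincidence, not create it. Without an independent mechanism that forces $Q_1(0)=Q_2(0)$ and $\phi_1(0)=\phi_2(0)$, the subsequent forward Gronwall step has no input and the argument does not close.

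The paper avoids this difficulty by running the whole argument from $r=+\infty$ rather than from $r=0$. Your parenthetical ``alternative'' is in fact the paper's main route: from Lemma~\ref{l.de3} one extracts the sharp asymptotics $Q_j(r)=C_* r^{-2/(p-2)}(1+o(1))$ with a \emph{universal} constant $C_*$, together with $A_j(r)=\|Q_j\|_{L^p}^p\, r^{-1}(1+o(1))$; since the Pohozaev normalization forces $\|Q_1\|_{L^p}=\|Q_2\|_{L^p}$ (your computation of $k_1=k_2$ is a clean way to see this), the leading profiles at infinity coincide. One then linearizes the difference $U=Q_1-Q_2$ around $u_0(r)=C_* r^{-2/(p-2)}$, uses the cancellation $Q_1^{p-1}-Q_2^{p-1}=(p-1)u_0^{p-2}U+O(u_0^{p-2}|U|(|w_1|^{p-2}+|w_2|^{p-2}))$ and the maximum principle to derive the integral inequality $\varphi(r)\le Cr^{-1}\int_r^\infty s^{-\varepsilon}\varphi(s)\,ds$ for $\varphi=|Q_1-Q_2|+|A_1-A_2|$, and a Gronwall lemma at infinity (Lemma~\ref{l.g1}) gives $\varphi\equiv 0$ for $r>r_0$. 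Standard ODE uniqueness for the system \eqref{eq.ODE-sysm} then extends this to all $r>0$. In short, the matching is obtained where the equation itself supplies it (at infinity), not at the origin where no such information is available.
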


It is well - known that Gagliardo - Nirenberg inequaities of type
\begin{equation}\label{eq.I7}
   D(|u|^q,|u|^q) \leq C_1 \| (\sqrt{-\Delta})^s u \|_{L^2(\R^3)}^{2q-\theta} \|u\|_{L^p(\R^3)}^{\theta}, \ \ \theta \in (0,2q),
\end{equation}
or
\begin{equation}\label{eq.I7a}
   \|u\|_{L^p(\R^3)}  \leq C_2 \| (\sqrt{-\Delta})^s u \|_{L^2(\R^3)}^{1-\theta} D(|u|^q,|u|^q)^{\theta}, \ \ \theta \in (0,1),
\end{equation}
with $ s \in (1/2,1]$
are intensively studied in the last years (see \cite{BFV14}, \cite{GS18}). In general \eqref{eq.I7a} behaves like Hardy type functional so there is no radial minimizer (see \cite{BGO16}) of
$$ \frac{\| (\sqrt{-\Delta})^s u \|_{L^2(\R^3)}^{1-\theta} D(|u|^q,|u|^q)^{\theta}}{\|u\|_{L^p(\R^3)}}$$
while positive radial minimizers of $W_p(u)$ exist and it is important problem to establish their uniqueness.
Indeed, the classical Hardy functional
$$ \frac{\|\nabla u \|^2_{L^2(\R^3)}}{\int V(x) |u(x)|^2 dx}$$
with $V(x) \sim |x|^{-2}$ has strictly positive lower bound, but there is no minimizer in $\dot{H}^1(\R^3).$ Small perturbation of $V$ such that
$$ \lim_{x \to \infty} V(x) |x|^2 = \lim_{x \to 0} V(x) |x|^2 =0$$ changes the situation (see \cite{GW18}).

Since the nonlinear terms in Gagliardo - Nirenberg estimates involve nonlocal interactions, we can not apply a Sturm comparison argument to show uniqueness of positive radial minimizers of the corresponding Weinstein functionals.

The classical case $p=q=2, s=1$  have been studied in \cite{Lieb}, where the uniqueness approach is based on shooting method and the fact that the asymptotic behavior of the Riesz potential is
\begin{equation}\label{eq.tr1}
   I(|u|^2)(x) = \frac{\|u\|^2_{L^2}}{4\pi |x|} + o\left(|x|^{-1} \right), \ \ x \to \infty.
\end{equation}
In this case the Pohozhaev normalization conditions \eqref{eq.poh2} become
$$
   \frac{\| u\|^2}{3} = \|\nabla u\|^2 = \frac{D(|u|^2,|u|^2)}{4} = k.
$$
Indeed, taking any two solutions $u_1,u_2$, we use the normalization conditions and from \eqref{eq.tr1} we can deduce that
$$ I(|u_1|^2)(x) - I(|u_2|^2)(x) = o\left(|x|^{-1} \right), \ \ x \to \infty.$$
This fact gives the possibility to apply Sturm argument and by following shooting method to deduce  uniqueness.

If $p\neq 2, $
then from Remark \ref{AB} we face the first obstacle, namely the loss of the exponential behavior at infinity. Nevertheless, the decay rate given by \eqref{eq.estim1} (see also Lemma \ref{l.de3}) is sufficient to get
\begin{equation}\label{eq.tr12}
  I(|Q|^p)(x) = \frac{\|Q\|_{L^p}^p}{4\pi |x|} + o\left(|x|^{-1} \right), \ \ x \to \infty
\end{equation}
and obviously we gain control on the asymptotics of Riesz potential at infinity, since the $L^p$ norm is a conserved Pohozhaev quantity.
Applying then a Gronwall argument (see Lemmas \ref{lgm} and \ref{l.g1}), we can conclude that the function
$$ \varphi(r) = |Q_1(r) - Q_2(r)| + |I(|Q_1|^p)(r) - I(|Q_2|^p)(r)| $$is identically zero for $r\in [0;\infty)$.

The outline of this paper is as follows. In Section 2, we briefly describe the basic steps in proving existence of positive, radial and decreasing minimizers and then we focus on establishing the asymptotic behavior of $Q$ and $I(|Q|^p)$ . In Section 3, we apply variational techniques  to get the Euler - Lagrange equation \eqref{eq.EL1} and Pohozhaev normalization conditions \eqref{eq.poh2}. Then, exploiting the asymptotic behavior from Section 2 and using a modified Gronwall Lemma we prove  the
uniqueness result of Theorem \ref{Th2}. A short proof of the Gronwall type Lemma is given in the Appendix.

\section{Far field decay estimates}

The proof of the existence of radial, positive and decreasing minimizer  $u \in \dot{H}^1(\R^3) \cap L^p(\R^3)$ for $p \in (2,5)$ is a standard argument, based on Gagliardo - Nirenberg inequality \eqref{eq.I7}, the Strauss Lemma \ref{l.S1} and Schwartz symmetrization (see, for instance \cite{Lieb1}).

Here, we can refer to the Diamagnetic inequality implying (see \cite{LL}) that $$\|\nabla |u| \|_{L^2}\leq \|\nabla u\|_{L^2}$$and therefore, our minimizer is nonnegative.
Even in the nonlocal case, the problem that the minimizers  of
$W_p(u)$ are radially symmetric functions, is easy to be
proved. A classical approach to radial symmetry of minimizers is
Schwarz symmetrization (or spherical decreasing rearrangement
\cite{LL}). For a nonnegative function $u $, its
symmetrization $u^*$ is a radially-decreasing function from $\R^n$
into $\R$, which has the property that for any $a > 0$
\begin{equation}\label{eq.rearrang}
\mu\{x \in \R^n:  u^*(x)
> a \} = \mu\{x \in \R^n:  u(x) > a \},
\end{equation}
where $\mu$ is Lebesgue measure. It is well-known  that $u^*$
satisfies the inequalities
\begin{equation}\label{eq.rear-ineq1}
\| \nabla u^*\|^2_{L^2} \leq \| \nabla u\|^2_{L^2},
\end{equation}
for any  $u \in \dot{H}^1(\R^n)$ and
\begin{equation}\label{eq.rear-ineq2}
\| u^*\|_{L^p} =\| u\|_{L^p},
\end{equation}
for all $1\leq p<\infty$ such that $u \in L^p(\R^n)$. Moreover, for any nonnegative $u \in L^p(\R^n)$ we have
\begin{eqnarray}\label{eq.rear-ineq3}
D({u^*}^p,{u^*}^p) \geq D({u}^p,{u}^p),
\end{eqnarray}
due to the Riesz inequality
for rearrangements (see, for instance Lemma 3 in \cite{Lieb}). Finally,  the existence of a
sequence of radially symmetric minimizers can be proved, following the idea of Lions \cite{PLLions} and using some important properties of the Coulomb functional $D(\cdot,\cdot)$, established in \cite{GV}.

Therefore, from now on we shall assume that $$u \in \dot{H}^1(\R^3) \cap L^p(\R^3)$$
is radially symmetric, positive and decreasing
solution of
\eqref{eq.EL1}. Thus, equation \eqref{eq.EL1} can be transformed into the following system of ordinary differential equations
 \begin{align}\label{eq.Sb2c}
&- r^{-2}\left( r^2u^\prime(r) \right)^\prime +  u^{p-1}(r) =
A(r) u^{p - 1}(r),
\nonumber
\\
& - r^{-2}\left( r^2A^\prime(r) \right)^\prime =
 u^{p }(r),
  \end{align}
where $$ A(|x|) := I(u^p)(|x|) .$$ An application of Newton's theorem for radially symmetric functions
(see Theorem 9.7 in \cite{LL}) to the solution $A$ of Poisson equation in \eqref{eq.Sb2c} implies the representation formula
\begin{eqnarray}\label{eq.Sb2a}
 \nonumber   A(|x|)=\int_0^{\infty}\frac{u^p(s)s^2 ds}{\max\{|x|, s\}} \\
     = \frac {1}{|x|}\|u\|_{L^p}^p+\int_{|x|}^\infty\left(\frac{1 }{s}-\frac{1
}{|x|}\right)u^p(s) s^2ds.
  \end{eqnarray}
  We have a variant of Strauss lemma (see \cite{St77}).
  \begin{lem} \label{l.S1}
  If $u \in \dot{H}_{rad}^1(\R^3) \cap L^p_{rad}(\R^3)$ is a function which decays sufficiently rapidly at $\infty$, then we have the estimate
  \begin{equation}\label{eq.ST1}
    r^{4/(p+2)}u(r) \leq C \|\nabla u\|_{L^2}^{2/(p+2)} \|u\|_{L^p}^{p/(p+2)}.
  \end{equation}
  \end{lem}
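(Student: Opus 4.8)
The plan is to establish the pointwise bound \eqref{eq.ST1} by the classical radial Sobolev argument, keeping track of how the $L^2$ norm of the gradient and the $L^p$ norm of $u$ enter. First I would work with a smooth, rapidly decaying radial function $u(r)$ (the general case following by density, which is exactly why the hypothesis ``decays sufficiently rapidly at $\infty$'' is imposed). The starting identity is the fundamental theorem of calculus written for the weight $r^{\alpha}u(r)^{\beta}$ for suitable exponents $\alpha,\beta>0$ to be chosen: since this quantity vanishes at $r=\infty$,
\begin{equation}\label{eq.ftc}
 r^{\alpha} u(r)^{\beta} = -\int_r^{\infty} \frac{d}{ds}\bigl(s^{\alpha}u(s)^{\beta}\bigr)\, ds = -\int_r^{\infty}\bigl(\alpha s^{\alpha-1}u(s)^{\beta} + \beta s^{\alpha}u(s)^{\beta-1}u'(s)\bigr)\, ds .
\end{equation}
Taking absolute values and estimating $\alpha s^{\alpha -1}u^{\beta}\le C s^{\alpha -1}u^{\beta}$ and $\beta s^{\alpha}u^{\beta-1}|u'|$, I would bound the right-hand side by $C\int_r^{\infty} s^{\alpha-1}u(s)^{\beta-1}\bigl(u(s)+s|u'(s)|\bigr)\,ds$ and then aim to recognize the two factors $u(s)$-powers and the $|u'(s)|$-factor as pieces of $\|u\|_{L^p}$ and $\|\nabla u\|_{L^2}$ in the radial integrals $\int_0^\infty u^p s^2\,ds$ and $\int_0^\infty |u'|^2 s^2\,ds$.

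The bookkeeping goes as follows. I would split the integrand and apply Hölder's inequality to the term containing $|u'|$: write $s^{\alpha}u^{\beta-1}|u'| = \bigl(s^{2/2}|u'|\bigr)\cdot\bigl(s^{\alpha-1}u^{\beta-1}\bigr)$ and pair $s|u'|$ against the $L^2$-with-weight-$s^2$... more precisely, estimate
\begin{equation}\label{eq.holder}
 \int_r^{\infty} s^{\alpha}u^{\beta-1}|u'|\,ds \le \Bigl(\int_r^{\infty}|u'|^2 s^2\,ds\Bigr)^{1/2}\Bigl(\int_r^{\infty} s^{2\alpha-2}u^{2\beta-2}\,ds\Bigr)^{1/2},
\end{equation}
and then force the second factor to be a multiple of a power of $\int_0^\infty u^p s^2\,ds$. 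Matching exponents between $\int s^{2\alpha-2}u^{2\beta-2}\,ds$ and $\bigl(\int u^p s^2\,ds\bigr)^{\text{something}}$ (using monotonicity of $u$ where a genuine pointwise bound for $u(s)^{\text{excess power}}$ is needed, since $u$ is decreasing one has $u(s)\le u(r)$ for $s\ge r$, which is the standard trick to close the estimate) pins down $\alpha$ and $\beta$. I expect the forced choice to be $\beta = (p+2)/2$ and $\alpha = 4/(p+2)\cdot\beta/1 = \ldots$ — in any case the exponents must come out so that the final inequality reads $r^{4/(p+2)}u(r)\le C\|\nabla u\|_{L^2}^{2/(p+2)}\|u\|_{L^p}^{p/(p+2)}$, and this is really a matter of solving a small linear system in $\alpha,\beta$ dictated by scaling: both sides must be invariant under $u\mapsto\mu u(\nu\cdot)$, which already determines the powers $2/(p+2)$ and $p/(p+2)$ and the weight $r^{4/(p+2)}$ uniquely.

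The main obstacle — really the only subtle point — is handling the purely $u$-dependent term $\int_r^\infty s^{\alpha-1}u^{\beta}\,ds$ coming from differentiating the weight $s^\alpha$ in \eqref{eq.ftc}: this term has no $u'$ factor, so it cannot be absorbed into $\|\nabla u\|_{L^2}$ directly. Here one uses that $u$ is radially decreasing, so $u(s)\le u(r)$ on $[r,\infty)$; pulling out a power $u(r)^{\beta-\gamma}$ and leaving $\int_r^\infty s^{\alpha-1}u(s)^{\gamma}\,ds$ with $\gamma$ chosen so this integral is controlled by $\|u\|_{L^p}^p$ gives an inequality of the form $r^\alpha u(r)^\beta \le C u(r)^{\beta-\gamma}(\cdots)$, which after dividing and cleaning up yields the same bound with possibly a different (still admissible) constant. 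Since the scaling argument forces the exponents regardless, the two contributions combine to \eqref{eq.ST1}. Finally I would remove the smoothness/rapid-decay assumption by approximating $u\in\dot H^1_{rad}\cap L^p_{rad}$ by such functions and passing to the limit, using lower semicontinuity of the norms on the right and the fact that the left-hand side is evaluated at a fixed radius.
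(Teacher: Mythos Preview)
Your overall setup is right --- the exponent $\beta=(p+2)/2$ is correct, and the derivative term $\int s^{\alpha}u^{\beta-1}|u'|\,ds$ is indeed controlled by $\|\nabla u\|_{L^2}\|u\|_{L^p}^{p/2}$ via Cauchy--Schwarz once one also fixes $\alpha=2$ (which you leave undetermined). The gap is in your treatment of the ``purely $u$'' term $\int_r^\infty s^{\alpha-1}u^{\beta}\,ds$. You plan to pull out a factor $u(r)^{\beta-\gamma}$ using that $u$ is decreasing, but monotonicity is \emph{not} a hypothesis of the lemma --- only radial symmetry and decay at infinity are assumed. Worse, even granting monotonicity the sketch does not close: with $\alpha=2$ the term is $\int_r^\infty s\,u^{(p+2)/2}\,ds$, and no splitting $u^{(p+2)/2}=u^{\beta-\gamma}u^{\gamma}$ with $u^{\beta-\gamma}(s)\le u^{\beta-\gamma}(r)$ leaves a remaining integral $\int_r^\infty s\,u^{\gamma}\,ds$ controlled by a power of $\int u^{p}s^{2}\,ds$; the $s$-weight is always off by one, and bootstrapping from the target bound $u(s)\lesssim s^{-4/(p+2)}$ gives a logarithmically divergent integral.

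The missing ingredient is Hardy's inequality. The paper takes $u\ge 0$ without loss of generality, sets $\alpha=2$, $\beta=(p+2)/2$, and integrates from $0$ to $r$ rather than from $r$ to $\infty$:
\[
 r^{2}u^{(p+2)/2}(r)=2\int_{0}^{r}u^{(p+2)/2}(s)\,s\,ds+\frac{p+2}{2}\int_{0}^{r}u'(s)u^{p/2}(s)\,s^{2}\,ds.
\]
The second integral is handled by Cauchy--Schwarz exactly as you say. For the first one writes $u^{(p+2)/2}s=u\cdot u^{p/2}s$, applies Cauchy--Schwarz, and then the three-dimensional Hardy inequality $\int_{0}^{\infty}u^{2}\,ds\le 4\int_{0}^{\infty}|u'|^{2}s^{2}\,ds$ to get
\[
 \int_{0}^{r}u^{(p+2)/2}s\,ds\le\Bigl(\int_{0}^{\infty}u^{2}\,ds\Bigr)^{1/2}\Bigl(\int_{0}^{\infty}u^{p}s^{2}\,ds\Bigr)^{1/2}\le C\|\nabla u\|_{L^{2}}\|u\|_{L^{p}}^{p/2}.
\]
This disposes of the weight-derivative term without any monotonicity and yields $r^{2}u^{(p+2)/2}(r)\le C\|\nabla u\|_{L^{2}}\|u\|_{L^{p}}^{p/2}$, which is \eqref{eq.ST1} after taking the $2/(p+2)$ power.
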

  \begin{proof} Without loss of generality we can take $u$ positive.
  We have the relation
  $$ r^2 u^{(p+2)/2}(r) = 2 \int_0^r  u^{(p+2)/2}(s) s ds + \frac{p+2}{2} \int_0^ru^\prime(s) u^{p/2}(s)  s^2 ds.$$
  Applying Cauchy's and Hardy's inequalities we get
  $$  r^2 u^{(p+2)/2}(r) \leq C \|\nabla u\|_{L^2} \|u\|_{L^p}^{p/2}$$
  and this completes the proof.

  \end{proof}

  Unfortunately, the Strauss type estimate \eqref{eq.ST1} is not sufficient to check the key property
  \begin{equation}\label{eq.Sb2d}
    \lim_{r \to \infty} r A(r) = \|u\|_{L^p}^p  .
  \end{equation}

However, for $p >2 $ we can apply \eqref{eq.ST1} and via the representation formula \eqref{eq.Sb2a} to get
\begin{equation}\label{eq.Sb3}
    A(r) \leq \frac{C}{r^\delta}, \ \delta= \frac{2(p-2)}{p+2} >0  .
  \end{equation}

We shall need a stronger decay estimate and therefore we pose the following Lemma.
\begin{lem} \label{l.de3}
 If  $p \in (2,5)$ and $u \in \dot{H}_{rad}^1(\R^3) \cap L^p_{rad}(\R^3)$ is  a positive decreasing solution of \eqref{eq.EL1}, then for any $r_0$ sufficiently large we have the estimate
  \begin{equation}\label{eq.ST10}
    u(r) \leq \frac{C}{r^{2/(p-2)}} , \ \ r > r_0.
  \end{equation}
\end{lem}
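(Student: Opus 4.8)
The idea is to treat the first equation in \eqref{eq.Sb2c} as a linear ODE for $u$ with a potential $V(r)=u^{p-2}(r)-A(r)u^{p-2}(r)=(1-A(r))u^{p-2}(r)$, and to use the already-established decay $A(r)\le C r^{-\delta}$ from \eqref{eq.Sb3} together with the Strauss bound \eqref{eq.ST1} to show that $V(r)$ is \emph{positive} and bounded below by a positive multiple of $u^{p-2}(r)$ for $r$ large. Indeed, since $A(r)\to 0$ as $r\to\infty$ by \eqref{eq.Sb3}, there is $r_0$ with $A(r)\le 1/2$ for $r\ge r_0$, hence the equation reads
\begin{equation}\label{eq.comp1}
-r^{-2}(r^2 u'(r))' + c(r)\,u^{p-1}(r)=0,\qquad c(r)=1-A(r)\ge \tfrac12,\quad r\ge r_0,
\end{equation}
so $(r^2 u'(r))' = r^2 c(r) u^{p-1}(r)\ge 0$; thus $r^2u'(r)$ is nondecreasing on $[r_0,\infty)$. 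Since $u$ is positive and decreasing, $u'\le 0$, so $r^2u'(r)$ is a nondecreasing nonpositive function, hence it has a limit $\ell\le 0$ as $r\to\infty$; if $\ell<0$ then $u'(r)\le \ell/(2r^2)$ for large $r$ would still be integrable, so that is not yet a contradiction — instead one integrates the identity once more to get a closed inequality for $u$.

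The key step: from \eqref{eq.comp1}, for $r\ge r_0$,
\begin{equation}\label{eq.comp2}
-r^2u'(r) = -r_0^2 u'(r_0) + \int_{r_0}^r s^2 c(s) u^{p-1}(s)\,ds \ \ge\ \tfrac12\int_{r_0}^r s^2 u^{p-1}(s)\,ds,
\end{equation}
and then, since $-u'(s)=s^{-2}\cdot(-s^2u'(s))$ and the right side of \eqref{eq.comp2} is nondecreasing, integrating $-u'$ from $r$ to $\infty$ (using $u(\infty)=0$) gives
\begin{equation}\label{eq.comp3}
u(r) = \int_r^\infty (-u'(s))\,ds \ \ge\ \frac12\int_r^\infty \frac{1}{s^2}\left(\int_{r_0}^s t^2 u^{p-1}(t)\,dt\right)ds \ \ge\ \frac{1}{2}\,u^{p-1}(r)\int_r^\infty \frac{1}{s^2}\!\left(\int_{r}^{s}\! t^2\,dt\right)\!ds,
\end{equation}
where in the last step I used that $u$ is decreasing so $u^{p-1}(t)\ge u^{p-1}(s)\ge$ is replaced carefully — more precisely one keeps $u^{p-1}(t)\ge u^{p-1}(r)$ is wrong direction, so instead bound the inner integral below by restricting $t\in[r,s]$ and using $u^{p-1}(t)\ge u^{p-1}(s)$, then handle the resulting $\int_r^\infty s^{-2}(\tfrac13(s^3-r^3))u^{p-1}(s)\,ds$. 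Cleaning this up, the correct self-improving inequality one obtains is of the form $u(r)\ge c_0\, r^{?}u^{p-1}(r)$ on a suitable range, equivalently $u^{p-2}(r)\le C r^{-?}$, i.e. $u(r)\le C r^{-?/(p-2)}$ with the exponent turning out to be $2$; one then bootstraps, feeding the improved bound back into \eqref{eq.Sb2a} to sharpen the estimate on $A$ if needed and iterating until the exponent stabilizes at $2/(p-2)$.

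The main obstacle is organizing this bootstrap so that the exponents match and converge to exactly $2/(p-2)$: a single pass through \eqref{eq.comp3} gives only a partial gain (controlled by the current knowledge of how fast $A$ and $u$ decay), and one must check that iterating the scheme — decay of $u$ $\Rightarrow$ via \eqref{eq.Sb2a} decay of the "lower order" correction in $A$ $\Rightarrow$ via \eqref{eq.comp2}–\eqref{eq.comp3} improved decay of $u$ — is a contraction whose fixed point is the claimed rate $r^{-2/(p-2)}$, which is precisely the rate at which the linear term $u^{p-2}u$ balances $-\Delta u$ (the formal scaling $u\sim r^{-2/(p-2)}$ makes $|u|^{p-2}u\sim r^{-2(p-1)/(p-2)}$ and $\Delta u\sim r^{-2/(p-2)-2}=r^{-2(p-1)/(p-2)}$, matching). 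Care is also needed at the lower endpoint $r_0$ to absorb the boundary terms $u'(r_0)$, $A$ near $r_0$, and to ensure all constants are independent of $r$.
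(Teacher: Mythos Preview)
Your integration in the ``key step'' has a sign error. From $(r^2 u')'=r^2 c(r)u^{p-1}\ge 0$ you correctly deduce that $r^2u'$ is nondecreasing, but integrating from $r_0$ to $r$ gives
\[
-r^2u'(r)\;=\;-r_0^2u'(r_0)\;-\;\int_{r_0}^{r}s^2c(s)u^{p-1}(s)\,ds,
\]
with a \emph{minus} sign on the integral, so your displayed inequality $-r^2u'(r)\ge \tfrac12\int_{r_0}^{r}s^2u^{p-1}(s)\,ds$ is false. One can salvage a lower bound of that type by integrating instead from $r$ to $\infty$ (using that $r^2u'(r)$, being nondecreasing and nonpositive, has a finite limit $\ell\le 0$), which yields $-r^2u'(r)=-\ell+\int_r^\infty s^2c(s)u^{p-1}(s)\,ds\ge \tfrac12\int_r^\infty s^2u^{p-1}(s)\,ds$. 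But even with this correction the remainder of your plan is only a sketch: you acknowledge that the monotonicity of $u$ points the ``wrong direction'' for the inner estimate, and you defer the essential content to an unperformed bootstrap whose convergence to the exact exponent $2/(p-2)$ you do not verify. In particular, starting from the Strauss rate $r^{-4/(p+2)}$ the integral $\int_r^\infty s^2 u^{p-1}(s)\,ds$ need not even be finite for small $p$, so the first step of the iteration already requires care you have not supplied.

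The paper bypasses all of this with a one–step comparison argument: once $A(r)\le \tfrac12$ for $r>r_0$, the solution satisfies $-\Delta u+\tfrac12 u^{p-1}\le 0$ there, while the explicit barrier $u_0(r)=Cr^{-2/(p-2)}$ satisfies $-\Delta u_0+\tfrac12 u_0^{p-1}>0$ for $C$ large (both terms scale like $r^{-2(p-1)/(p-2)}$, and the positive nonlinear term dominates for large $C$). Choosing $C$ so that also $u(r_0)\le u_0(r_0)$, the maximum principle on $(r_0,\infty)$ gives $u\le u_0$ directly. This builds the target exponent into the comparison function from the outset, so no iteration is needed.
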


\begin{proof}
The weak decay estimate \eqref{eq.Sb3} and the first equation in \eqref{eq.Sb2c} show that
$$ A(r) < \frac{1}{2}, \ \ \forall r > r_0 $$ and $r_0$ sufficiently large. Therefore we can assert that
$$ -\Delta u (r) + \frac{u^{p-1}(r)}{2} := F(r) \leq 0, \ \ \forall r > r_0.$$
Setting
$$ u_0(r) = \frac{C}{r^{2/(p-2)}}$$
and choosing $C$ sufficiently large, we can verify the property
$$ -\Delta u_0 (r) + \frac{u_0^{p-1}(r)}{2} := F_0(r) > 0, \ \ \forall r > r_0.$$
Now, we are in position to apply the maximum principle and deduce
\begin{equation}\label{eq.as1}
    u(r) \leq u_0(r) , \ \ r > r_0.
\end{equation}
Indeed, if $I=(r_1,r_2)$ is an interval in $(r_0,\infty)$, such that
$$ u(r) > u_0(r), \ \ r \in I, \ \ u(r)=u_0(r), \ \  r \in \partial I,$$
then the minimum of $(u_0-u)$ is on the boundary $\partial  I$ of $I,$ since the minimum in interior point $r^* \in I$ means $ \Delta (u_0-u)(r^*)  \geq 0,$
while our choice of $u_0$ implies
$$ \Delta (u_0 - u)(r) = \frac{1}2 (u_0^{p-1}(r) - u^{p-1}(r)) -F_0(r) + F(r) < 0 , \ \ \forall r \in I.$$
The contradiction shows that \eqref{eq.as1} is fulfilled and the proof is finished.
\end{proof}

\begin{rem} The above estimate and the assumption $p \in (2,5) $ imply now \eqref{eq.Sb2d}.
\end{rem}
\begin{rem} \label{r2} If $p=2$, then the identity \eqref{eq.ST1} becomes
$$  -\Delta u (r) + \frac{u(r)}{2} := F(r) \leq 0, \ \ \forall r > r_0,$$
so we can take
$$ u_0(r) = \frac{e^{-\varepsilon r}}r $$
with $\varepsilon \in (0,1/2).$
\end{rem}

\begin{rem} \label{r3} A natural question is if the decay  estimate \eqref{eq.ST10} of Lemma \ref{l.de3} is optimal. One can use the upper bound  \eqref{eq.ST10} and via
$$ A(|x|) := I(u^p)(|x|) \lesssim |x|^{-4/(p-2)}$$
it is possible to evoke   a comparison arguments (see Lemma 2.15 in \cite{AM}) and to deduce appropriate  lower bound of $u.$
\end{rem}

\section{Uniqueness of the minimizers of Weinstein's functional}

Our first result in this section is to derive the Pohozaev identities of Theorem \ref{Te1}. Since the classical case $p=2$ is a well-known result due to Lieb  \cite{Lieb}, then from now on we can assume $p \in (2,5).$
So, we can turn to the derivation of Euler - Lagrange equation \eqref{eq.EL1}.

For the first variation of
\begin{equation}\label{eq.var1}
 W_p(Q+\varepsilon h),
\end{equation}
we take for simplicity a positive real-valued function $h\in C^\infty_0(\R^3)$  and $\varepsilon>0.$ Then we have
 the relations
\begin{align}\label{eq.mrI}
    \frac d {d\varepsilon}\left( W_p(Q+\varepsilon h)\right)
    =W_p(Q+\varepsilon h) R(\varepsilon),
\end{align}
where
\begin{align}\label{eq.2var}
R(\varepsilon) =
\frac {2p \, \langle \nabla Q+\varepsilon \nabla h, \nabla h\rangle_{L^2}}{(6-p)\left\| \nabla Q+\varepsilon \nabla h\right\|^{2}_{L^{2}}}
+\frac{2p(5-p)\, \langle (Q+\varepsilon h)^{p-1}, h\rangle_{L^2}}{(6-p)\left\| Q+\varepsilon h\right\|_{L^p}^{p}}&
\nonumber\\
-\frac{2p D((Q+\varepsilon h)^{p-1} h,(Q+\varepsilon h)^{p})}{D((Q+\varepsilon h)^p,(Q+\varepsilon h)^p)}.
\end{align}
Taking $\varepsilon =0$ and using the fact that $Q$ is a minimizer of $W_p$, we get the equation
$$ -\frac {\, \Delta Q}{(6-p)\left\| \nabla Q\right\|^{2}_{L^{2}}}
+\frac{(5-p)\, Q^{p-1}}{(6-p)\left\| Q\right\|_{L^p}^{p}} =\frac{ I(Q^{p}) \, Q^{p-1}}{D(Q^p,Q^p)}.$$

Using now a rescaling argument, we can assume the Pohozaev normalization conditions \eqref{eq.poh2} fulfilled and then the equation becomes  \eqref{eq.EL1}. In fact, this completes the proof of Theorem \ref{Te1}.
\\

Our next step is to prove the uniqueness of the Weinstein minimizers.
First, we recall Lemma \ref{l.de3} and following its proof, we get a more precise asymptotics of $Q$ and $A=I(Q^p),$ namely
\begin{equation}\label{eq.as3}
  Q(r) = \frac{C_*}{r^{2/(p-2)}} \left( 1 + o(1) \right), \ A(r) = \frac{\|Q\|_{L^p}^p}{r} \left( 1 + o(1) \right),
\end{equation}
for $r \to \infty.$ Here and below $C_*>0$ is a universal constant, determined by the equation
and space dimension.

The key idea to obtain the uniqueness result is to assume that $Q_1$ and $Q_2$ are two radially symmetric, positive and decreasing solutions
$$ Q_1(r), Q_2(r) \in \dot{H}^1_{rad}(\R^3) \cap L^p_{rad} (\R^3)$$ of the differential equation
\begin{equation}\label{eq.un1}
- Q^{\prime\prime}_j(r) - \frac{2Q_j^\prime(r)}{r}  - A_j(r) \ Q_j(r)^{p - 1}  +    Q_j(r)^{p-1} =0,\ \ j=1,2,
\end{equation}
with
$$ A_j(r) = I(Q^p_j)(r) = \int_0^{\infty}\frac{Q_j(s)^p s^2 ds}{\max\{r,s\}},$$
such that the asymptotic expansions \eqref{eq.as3} are fulfilled, i.e.
\begin{equation}\label{eq.un2}
  Q_j(r) = \frac{C_*}{r^{2/(p-2)}} \left( 1 + o(1) \right), \ A_j(r) = \frac{\|Q_j\|_{L^p}^p}{r} \left( 1 + o(1) \right)
\end{equation}
and to show that
\begin{equation}\label{eq.un3}
  \|Q_1\|_{L^p} = \|Q_2\|_{L^p}.
\end{equation}
But all these  follow directly from Pohozaev normalization conditions \eqref{eq.poh2}.

To complete the proof of the uniqueness, we shall consider the function
\begin{equation}\label{eq.phi-def}
   \varphi(r) = |Q_1(r) - Q_2(r)| + |A_1(r) - A_2(r)|,
\end{equation}
in the interval
$ r \in (r_0, \infty),$ where $r_0>$ is sufficiently large.

The estimate of
$|Q_1(r) - Q_2(r)|$
uses in an essential way the fact that the leading terms of the asymptotic expansions for $Q_j(r), j=1,2$ coincide,
so
\begin{equation}\label{Gr3m}
 Q_j(r) = \frac{C}{r^{2/(p-2)}} \left( 1 + w_j(r) \right), \ \ w_j(r) = O\left(r^{-\varepsilon}\right),
\end{equation}
due to \eqref{eq.un2}.
Setting
$$ u_0(r) = \frac{C}{r^{2/(p-2)}},$$ we have
$$ Q_j(r)^{p-1} = u_0^{p-1} F(w_j(r)), \ \ F(x) = (1+x)^{p-1}.$$

This relation suggests the idea to treat the troublesome term $Q_j^{p-1}$ in the equation \eqref{eq.un1} satisfied by $Q_j.$

We have the following cancelation property needed in the sequel
\begin{eqnarray}\label{eqq.Gr4m}
    Q_1^{p-1} - Q_2^{p-1} = \\ \nonumber =(p-1) u_0^{p-2}\left( Q_1 - Q_2 \right) + O\left(u_0^{p-2}|Q_1-Q_2|(|w_1|^{p-2}+|w_2|^{p-2}) \right),
\end{eqnarray}
due to the property
$$  F(x_1) - F(x_2)= F^\prime(0) (x_1-x_2) + O\left( |x_1-x_2|(|x_1|^{p-2}+|x_2|^{p-2})\right)$$
for $p \in (2,5)$ and for all $x_1, x_2 \in \R$ close to zero.

Note that $U=Q_1-Q_2$ is a solution to the problem
\begin{equation}\label{eq.FU3}
   -\Delta U + (p-1) u_0^{p-2} U = G(u_1,u_2),
\end{equation}
where
$$ G(Q_1,Q_2)=  I(Q_1,Q_2)+ II(Q_1,Q_2) $$
with
$$  I(Q_1,Q_2) = -Q_1^{p-1} + Q_2^{p-1} + (p-1) u_0^{p-2}\left( Q_1 - Q_2 \right), $$
$$ II(Q_1,Q_2)= A_1 Q_1^{p-1}- A_2 Q_2^{p-1} .$$

Now we apply the maximum principle and deduce
$$ |U(r)| \leq U^*(r), $$
where
$U^*(r)$ is a solution to the problem
 \begin{equation}\label{eq.FU4}
   -\Delta U^*  = |G(Q_1,Q_2)|,
\end{equation}
satisfying Dirichlet boundary condition
$$ U^*(r_0) = C_1 r_0^{-2/(p-2)},$$
with $C_1 >0$ sufficiently large.
Applying a priori estimate for the solution of \eqref{eq.FU4}, we get the estimate
$$
  |U(r)| \leq   U^*(r) \leq \frac{C}{r}\ \int_r^\infty  |G(Q_1(s),Q_2(s))| s^2 ds.
$$

Since
$$  G(Q_1(s),Q_2(s)) = I(Q_1,Q_2)(s) + II(Q_1,Q_2)(s),$$
we can estimate the term $I(s)=I(Q_1(s),Q_2(s))$ as follows
$$ |I(s)| \leq \frac{C \varphi(s)}{s^{2+ \varepsilon}},$$
due to \eqref{eqq.Gr4m} and the bound \eqref{Gr3m} for $w_j$, while the second term satisfies
$$ |II(s)| \leq \frac{C \varphi(s)}{s^{3}}.$$
In conclusion, we get
$$
   |U(r)| \leq \frac{C}{r}\ \int_r^\infty \frac{ \varphi(s)ds}{s^{\varepsilon}}
$$
for all $r > r_0.$

To estimate $|A_2(r)-A_1(r)|$ we can write
\begin{equation}\label{eq.Gr2m}
    |A_2(r)-A_1(r)| \leq \frac{C}{r}\ \int_r^\infty \frac{ \varphi(s)s^2 ds}{s^{2(p-1)/(p-2)}}.
\end{equation}

The above estimate and \eqref{eq.Gr2m} imply
$$
   \varphi(r) \leq \frac{C}{r}\ \int_r^\infty \frac{ \varphi(s)ds}{s^{\varepsilon}},
$$
and thus we arrive at the following assertion.

\begin{lem} \label{lgm} There exists a constant $C >0,$ so that the function $\varphi$, defined in \eqref{eq.phi-def}, satisfies
$$ \varphi(r) \leq \frac{C}{r}  \int_r^\infty \frac{\varphi(s) ds}{s^{\varepsilon}},  $$
for any $r> R$ and $0 < \varepsilon < 2/(p-2).$
\end{lem}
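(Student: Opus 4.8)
The plan is to organize the far-field estimates derived above into the claimed inequality by treating the two pieces $|Q_1-Q_2|$ and $|A_1-A_2|$ of $\varphi$ separately. In each case the underlying (elliptic, resp. Poisson) equation is turned into an integral inequality via Newton's formula \eqref{eq.Sb2a} and a comparison/maximum-principle argument, and in each case one gains an extra negative power of $s$ because the leading far-field terms of $Q_1$ and $Q_2$ — hence of $A_1$ and $A_2$ — coincide.

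First I would set $u_0(r)=C r^{-2/(p-2)}$, write $Q_j(r)=u_0(r)(1+w_j(r))$ with $w_j(r)=O(r^{-\varepsilon})$ as in \eqref{Gr3m} (a consequence of \eqref{eq.un2}), and record the cancellation identity \eqref{eqq.Gr4m} for $Q_1^{p-1}-Q_2^{p-1}$; this uses the Taylor expansion of $F(x)=(1+x)^{p-1}$ at $x=0$ together with the H\"older-type remainder bound $|F(x_1)-F(x_2)-F'(0)(x_1-x_2)|\le C|x_1-x_2|(|x_1|^{p-2}+|x_2|^{p-2})$, which is valid for $p\in(2,5)$ near $x=0$ since there $p-1>1$ and $F\in C^1$ with a H\"older-continuous derivative. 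Then $U:=Q_1-Q_2$ solves the linear problem \eqref{eq.FU3} with right-hand side $G=I+II$. From \eqref{eqq.Gr4m}, $u_0^{p-2}(s)\sim s^{-2}$ and $w_j=O(s^{-\varepsilon})$ one gets $|I(s)|\le C\varphi(s)\,s^{-2-\delta}$ for some $\delta>0$; splitting $II=(A_1-A_2)Q_1^{p-1}+A_2(Q_1^{p-1}-Q_2^{p-1})$ and using $A_j(s)\le C/s$ (from \eqref{eq.Sb2d}), $Q_j(s)^{p-1}\le C s^{-2(p-1)/(p-2)}$, $|A_1-A_2|\le\varphi$ and \eqref{eqq.Gr4m} once more, one obtains $|II(s)|\le C\varphi(s)\,s^{-2-\delta'}$ with $\delta'>0$.

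Next, since the zeroth-order coefficient $(p-1)u_0^{p-2}$ in \eqref{eq.FU3} is nonnegative, the maximum principle bounds $|U|$ by the solution $U^*$ of the Poisson problem \eqref{eq.FU4}, $-\Delta U^*=|G|$, with Dirichlet datum $U^*(r_0)=C_1 r_0^{-2/(p-2)}\ge|U(r_0)|$ — admissible because $|Q_1(r_0)-Q_2(r_0)|\le C r_0^{-2/(p-2)}$ by Lemma \ref{l.de3} — and the a priori estimate for radial solutions of the three-dimensional Poisson equation yields $|U(r)|\le U^*(r)\le \tfrac{C}{r}\int_r^\infty |G(s)|s^2\,ds$; combined with the bounds on $I$ and $II$ this produces $|U(r)|\le \tfrac{C}{r}\int_r^\infty \varphi(s)\,s^{-\varepsilon}\,ds$. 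For the second piece I would use that $A_j$ solves $-\Delta A_j=Q_j^p$, so by Newton's formula \eqref{eq.Sb2a} and the equality $\|Q_1\|_{L^p}=\|Q_2\|_{L^p}$ coming from the Pohozaev conditions \eqref{eq.poh2} — which makes the $1/r$ leading terms of $A_1$ and $A_2$ cancel exactly — one has $A_1(r)-A_2(r)=\int_r^\infty(1/s-1/r)(Q_1^p-Q_2^p)(s)s^2\,ds$, hence $|A_1(r)-A_2(r)|\le \tfrac{C}{r}\int_r^\infty|Q_1^p(s)-Q_2^p(s)|s^2\,ds$; since $|Q_1^p-Q_2^p|\le C u_0^{p-1}|Q_1-Q_2|\le C\varphi(s)\,s^{-2(p-1)/(p-2)}$ this is precisely \eqref{eq.Gr2m}, i.e. $|A_1-A_2|(r)\le \tfrac{C}{r}\int_r^\infty\varphi(s)\,s^{-2/(p-2)}\,ds$. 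Adding the two bounds, and noting that $\int_r^\infty\varphi(s)s^{-\varepsilon}\,ds$ is nonincreasing in $\varepsilon$ for $r>R:=r_0\ge1$, gives the asserted inequality for every $\varepsilon\in(0,2/(p-2))$, with $C=C(\varepsilon,p)$.

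\textbf{Main obstacle.} The crux is the cancellation step: bounding $Q_1^{p-1}-Q_2^{p-1}$ merely by $C u_0^{p-2}|Q_1-Q_2|$ would give only $|U(r)|\lesssim \tfrac1r\int_r^\infty\varphi(s)\,ds$, which is too weak for the Gronwall argument of Lemma \ref{l.g1}; one genuinely needs the matching of leading asymptotics from \eqref{eq.un2} (for $U$) and the $L^p$-norm equality from \eqref{eq.poh2} (for $A_1-A_2$) to extract the gain $s^{-\varepsilon}$. Two auxiliary points to watch are that the remainder in \eqref{eqq.Gr4m} carries a genuine positive power throughout the range $p\in(2,5)$ (using $p-1>1$ and the rate $w_j=O(r^{-\varepsilon})$), and that the Dirichlet contribution at $r_0$ in the a priori Poisson estimate does not reintroduce a pure $1/r$ term surviving into the final inequality — this is arranged because $U=Q_1-Q_2$ decays strictly faster than $u_0$ and, once the norms agree, $A_1-A_2$ decays faster than $1/r$.
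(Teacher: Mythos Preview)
Your proposal is correct and follows essentially the same route as the paper: you set up the same linear equation \eqref{eq.FU3} for $U=Q_1-Q_2$, invoke the cancellation \eqref{eqq.Gr4m}, compare via the maximum principle with the Poisson problem \eqref{eq.FU4}, and handle $A_1-A_2$ through Newton's formula together with the $L^p$-norm equality from \eqref{eq.poh2}, arriving at \eqref{eq.Gr2m}. If anything, you are a bit more explicit than the paper in splitting the term $II$ and in flagging the boundary contribution at $r_0$.
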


\begin{proof}[Proof of the uniqueness of ground state]

Applying the Gronwall argument of Lemma \ref{l.g1}, we use the estimate of Lemma \ref{lgm}  and conclude that
$$ \varphi(r) = |Q_1(r) - Q_2(r)| + |A_1(r) - A_2(r)| $$ is identically zero for
$ r >r_0.$ Since $ (Q_j,A_j), j=1,2$ are solutions to the Cauchy problem
\begin{alignat}{1}\label{eq.ODE-sysm}
&Q_j^{\prime \prime}+\frac{2}{r}\ Q_j^{\prime} =   Q_j^{p - 1}(1-A_j), \\
\nonumber & A_j^{\prime \prime}+\frac{2}{r}\ A_j^{\prime}  =  -Q_j^{p },
\end{alignat}
with
$$ Q_1(r)-Q_2(r)=A_1(r)-A_2(r) = 0, \ \ \forall r > r_0,$$
we obtain
$$Q_1(r)-Q_2(r)=0, \ \ \forall r > 0.$$

This implies that positive, radial, decreasing  ground state satisfying Pohozhaev conditions is unique and completes the proof.
\end{proof}

\section{Appendix: Gronwall type lemma}

\begin{lem} \label{l.g1}
If $\varepsilon > 0,$ $\psi(r) \in C(1,\infty)$ is a nonnegative function satisfying
\begin{equation}\label{eq.A1}
    \psi(r) \leq C, \ \ \forall r > 1,
\end{equation}
and
$$ \psi(r) \leq C \int_r^\infty \frac{\psi(s) ds}{s^{1+\varepsilon}}, \ \ \forall r> 1,$$
then $\psi(r) =0$ for $r >1.$
\end{lem}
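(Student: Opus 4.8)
The plan is to prove Lemma \ref{l.g1}, the Gronwall-type statement: if $\psi \in C(1,\infty)$ is nonnegative, bounded by $C$ on $(1,\infty)$, and satisfies $\psi(r) \leq C\int_r^\infty \frac{\psi(s)\,ds}{s^{1+\varepsilon}}$ for all $r>1$, then $\psi \equiv 0$ on $(1,\infty)$. The natural approach is to iterate the integral inequality and show that each iteration improves the decay of the bound on $\psi$ by an extra power $r^{-\varepsilon}$, so that after $n$ steps one has $\psi(r) \leq C_n r^{-n\varepsilon}$ with constants $C_n$ that stay under control; letting $n\to\infty$ forces $\psi(r)=0$ for all $r$ large enough, and then a second, simpler iteration propagates this down to all $r>1$.

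First I would set up the iteration carefully. Define the tail functional $\Phi(r) = \int_r^\infty \frac{\psi(s)\,ds}{s^{1+\varepsilon}}$, which is finite by the boundedness assumption \eqref{eq.A1} since $\int_1^\infty s^{-1-\varepsilon}\,ds < \infty$. The hypothesis reads $\psi(r) \leq C\,\Phi(r)$. Feeding this back into the definition of $\Phi$: for $R>1$,
\begin{equation*}
\Phi(R) = \int_R^\infty \frac{\psi(s)\,ds}{s^{1+\varepsilon}} \leq C\int_R^\infty \frac{\Phi(s)\,ds}{s^{1+\varepsilon}} \leq C\,\Phi(R)\int_R^\infty \frac{ds}{s^{1+\varepsilon}} = \frac{C}{\varepsilon}\,R^{-\varepsilon}\,\Phi(R),
\end{equation*}
using that $\Phi$ is nonincreasing. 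This already shows $\Phi(R)\bigl(1 - \tfrac{C}{\varepsilon}R^{-\varepsilon}\bigr) \leq 0$, hence $\Phi(R)=0$ as soon as $R$ is large enough that $\tfrac{C}{\varepsilon}R^{-\varepsilon} < 1$, i.e. for $R > R_* := (C/\varepsilon)^{1/\varepsilon}$. Consequently $\psi(r) \leq C\,\Phi(r) = 0$ for all $r > R_*$.

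It remains to push the conclusion from $(R_*,\infty)$ down to all of $(1,\infty)$. This is where a little care is needed, since the constant $R_*$ could be larger than $1$. The device is to observe that for $1 < r \le R_*$ the integral inequality still gives $\psi(r) \leq C\int_r^{R_*}\frac{\psi(s)\,ds}{s^{1+\varepsilon}} + C\int_{R_*}^\infty\frac{\psi(s)\,ds}{s^{1+\varepsilon}} = C\int_r^{R_*}\frac{\psi(s)\,ds}{s^{1+\varepsilon}}$, since the second integral vanishes. On the compact interval $[r, R_*]$ the weight $s^{-1-\varepsilon}$ is bounded, so this is a classical Gronwall-type inequality $\psi(r) \le C'\int_r^{R_*}\psi(s)\,ds$ with reversed orientation, whose only nonnegative continuous solution is $\psi \equiv 0$; alternatively, apply the first computation on the shrinking intervals $(\rho,\infty)$ noting that once $\psi$ vanishes past $R_*$ the effective tail starts at $\rho$ and the same $R_* $-threshold logic applies with the smaller cutoff, giving $\psi=0$ on $(2^{-1/\varepsilon}$-rescaled thresholds$)$ and iterating finitely many times reaches any point $>1$. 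The main obstacle — really the only subtlety — is precisely this last step: the single-shot iteration only clears a half-line $(R_*,\infty)$, and one must argue (via the bootstrapping on the already-cleared tail, or a direct compact-interval Gronwall) that nothing survives on $(1,R_*]$ either. Once that is handled, $\psi\equiv 0$ on $(1,\infty)$ and the proof is complete.
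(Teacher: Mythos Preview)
Your proof is correct, but it follows a different route from the paper's. The paper performs the change of variable $\rho=1/r$, which turns the tail inequality into
\[
\Psi(\rho)\;\le\; C\int_0^\rho \frac{\Psi(\sigma)}{\sigma^{1-\varepsilon}}\,d\sigma,\qquad \Psi(0)=0,
\]
on a finite interval, and then invokes the standard Gronwall lemma (the kernel $\sigma^{-(1-\varepsilon)}$ is integrable near $0$) to conclude $\Psi\equiv 0$ in one stroke. Your argument instead exploits the monotonicity of the tail $\Phi(r)=\int_r^\infty \psi(s)s^{-1-\varepsilon}\,ds$ to obtain $\Phi(R)\bigl(1-\tfrac{C}{\varepsilon}R^{-\varepsilon}\bigr)\le 0$, which kills $\psi$ on a half-line $(R_*,\infty)$, and then cleans up the remaining compact interval with an ordinary Gronwall inequality. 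The paper's substitution is slicker and avoids the two-stage structure; your method is more hands-on and makes the decay mechanism explicit without any change of variables. Either way the result follows; your ``alternative'' remark about rescaled thresholds is less transparent than the direct compact-interval Gronwall you give just before it, and I would simply drop it.
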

\begin{proof}
The assumption \eqref{eq.A1} and the integral estimate for $\psi$ imply
$$ \psi(r) \leq \frac{C}{r^{\varepsilon/2}}  \to 0 \ \mbox{as $r \to \infty$.} $$ Making the change of variable
$ r \to \rho=1/r,$ we reduce the proof to the following statement for
$$ \Psi(\rho) = \psi \left( \frac{1}{\rho}\right).$$

If $\Psi$ is a continuous non-negative function on $I=[0,R]$ such that
$ \Psi(0) = 0$ and
$$ \Psi(\rho) \leq \int_0^\rho \frac{\Psi(\sigma) d\sigma}{\sigma^{1-\varepsilon}}, \ \forall \rho \in I,$$
then $\Psi(\rho) \equiv 0$ in $I.$ This is the standard Gronwall lemma and the proof is now completed.
\end{proof}


\end{document}